\documentclass[pdflatex,sn-mathphys-num]{sn-jnl}

\usepackage{graphicx}
\usepackage{multirow}
\usepackage{amsmath,amssymb,amsfonts}
\usepackage{amsthm}
\usepackage{mathrsfs}
\usepackage[title]{appendix}
\usepackage{xcolor}
\usepackage{textcomp}
\usepackage{manyfoot}
\usepackage{booktabs}
\usepackage{algorithm}
\usepackage{algorithmicx}
\usepackage{algpseudocode}
\usepackage{listings}

\usepackage{needspace}
\theoremstyle{thmstyleone}
\newtheorem{theorem}{Theorem}[section]
\newtheorem{lemma}[theorem]{Lemma}
\newtheorem{corollary}[theorem]{Corollary}

\theoremstyle{thmstylethree}
\newtheorem{definition}[theorem]{Definition}

\theoremstyle{thmstyletwo}
\newtheorem{remark}[theorem]{Remark}

\begin{document}

\title[Operator Daugavet property in semifinite noncommutative $L_1$-spaces]
{The Operator Daugavet Property in Semifinite Noncommutative $L_1$-Spaces}

\author[1]{\fnm{Junxiang} \sur{ Qi}}\email{y25060009@stu.aqnu.edu.cn}

\author*[1]{\fnm{Qi} \sur{Liu}}\email{liuq67@aqnu.edu.cn}
\equalcont{These authors contributed equally to this work.}

\author[2]{\fnm{Yongjin} \sur{Li}}\email{stslyj@mail.sysu.edu.cn}
\equalcont{These authors contributed equally to this work.}

\affil*[1]{\orgdiv{School of Mathematics and Statistics},
	\orgname{Anqing Normal University},
	\orgaddress{
		\city{Anqing},
		\postcode{246133},
		\state{Anhui},
		\country{P. R. China}
}}

\affil[2]{
	\orgdiv{School of Mathematics},
	\orgname{Sun Yat-sen University},
	\orgaddress{
		\city{Guangzhou},
		\postcode{510275},
		\state{Guangdong},
		\country{P. R. China}
}}

\abstract{Let $\mathcal M$ be a diffuse semifinite von Neumann algebra endowed with a faithful normal semifinite trace $\tau$. We prove that, for every nonzero Banach space $Y$, the projective tensor product $L_1(\mathcal M,\tau)\widehat{\otimes}_{\pi}Y$ has the operator Daugavet property. Moreover, the witnessing operators may always be chosen contractive. This extends the operator Daugavet phenomenon from atomless vector-valued $L_1$-spaces to the semifinite noncommutative setting and yields further Daugavet-type consequences for projective symmetric tensor products, all without approximation assumptions.}

\keywords{operator Daugavet property, noncommutative $L_1$-space, projective tensor product}

\pacs[MSC Classification]{46B20, 46B28, 46L52}

\maketitle

\section{Introduction}

The Daugavet equation
\[
\|I+T\|=1+\|T\|
\]
originates from Daugavet's classical work on compact operators on
$C[0,1]$ \cite{Daugavet1963}. A Banach space $X$ is said to have the
Daugavet property if this identity holds for every rank-one operator
$T\colon X\to X$. Classical examples include atomless $L_1$-spaces
and spaces $C(K)$ with no isolated points; see
\cite{KadetsShvidkoySirotkinWerner}. The Daugavet property admits
several geometric characterizations in terms of slices of the unit ball
and is closely related to the geometry of projective tensor products.

The stability of the Daugavet property under projective tensor products
has been studied in a variety of settings. In particular, López-Pérez
and Rueda Zoca \cite{LopezPerezRuedaZoca} obtained tensor-product
results for $L$-embedded Banach spaces under additional density and
metric approximation assumptions. Related developments for
$L$-embedded spaces further exploit bidual geometry and local
reflexivity; see \cite{BecerraMohamed}. These results suggest that
projective tensor products provide a natural framework in which to
study stronger forms of the Daugavet phenomenon. More broadly, structural properties of projective tensor products,
including their local $\ell_p$-structure, type and cotype, and
superreflexivity, have also been extensively studied; see
\cite{BuDiestel,RuedaZoca2025,BuDowling2026,Ryan}.

A parallel theory arises in the noncommutative setting. Let $M$ be a
von Neumann algebra. Its predual $M_*$ is the noncommutative analogue
of an $L_1$-space, and in the semifinite case, when $M$ is equipped
with a faithful normal semifinite trace $\tau$, it is represented by
$L_1(M,\tau)$. Oikhberg \cite{Oikhberg} proved that the predual of a
von Neumann algebra has the Daugavet property precisely when the
algebra is diffuse; see also \cite{BecerraMartin}. Thus diffuseness
plays in the noncommutative setting the role of atomlessness in the
classical $L_1$ theory.

A stronger notion is the \emph{operator Daugavet property} (ODP),
introduced in \cite{RuedaTradaceteVillanueva}. Besides the usual
Daugavet geometry, the ODP requires operators which almost fix a
prescribed finite set while sending a suitable point of a slice to an
arbitrary target. Recently, Huang, Nessipbayev, Sukochev and Xu
\cite{HuangNessipbayevSukochevXu} proved that the predual of every
atomless von Neumann algebra has the operator Daugavet property.

It is therefore natural to ask whether this stronger property is
preserved by projective tensor products in the noncommutative
$L_1$ setting. More precisely, if $M$ is a diffuse semifinite von
Neumann algebra with a faithful normal semifinite trace $\tau$ and
$Y$ is a nonzero Banach space, does
\(
L_1(M,\tau)\widehat{\otimes}_{\pi}Y
\)
have the operator Daugavet property? The purpose of the present paper
is to answer this question affirmatively. Unlike existing approaches
to the ordinary Daugavet property, which typically involve
$L$-embeddedness, bidual arguments, or approximation hypotheses, our
proof relies directly on the projection and corner structure of the
underlying von Neumann algebra.

The present results both extend and strengthen several earlier contributions to the Daugavet theory.  First, Theorem~1.1 extends the operator Daugavet phenomenon from atomless vector-valued $L_1$-spaces, considered in \cite{RuedaTradaceteVillanueva}, to the semifinite noncommutative setting by proving that
\(
L_1(M,\tau)\widehat{\otimes}_{\pi}Y
\)
has the operator Daugavet property for every nonzero Banach space $Y$ whenever $M$ is diffuse.  Within the class of semifinite noncommutative $L_1$-spaces, this also strengthens earlier projective tensor-product results for the ordinary Daugavet property obtained in the $L$-embedded framework \cite{LopezPerezRuedaZoca,BecerraMohamed}, since no density or metric approximation assumption is required and the stronger operator Daugavet property is obtained, with contractive witnessing operators.  Moreover, Corollary~\ref{cor:characterization} refines Oikhberg's characterization of diffuse von Neumann algebras via the Daugavet property \cite{Oikhberg} to an operator-Daugavet and tensor-product characterization in the semifinite case.  Finally, in the commutative setting, the symmetric tensor consequences of Corollary~\ref{cor:symmetric} recover and strengthen the corresponding Daugavet conclusions for vector-valued $L_1$-spaces in \cite{MartinRueda}.

The proof uses a direct noncommutative localization argument based on finite spectral truncation, diffuse equal-trace decompositions, and rectangular corner estimates.  These tools provide simultaneous slice localization, finite-set stability, and exact interpolation, while avoiding the bidual, local reflexivity, and approximation techniques used in earlier tensor-product approaches \cite{LopezPerezRuedaZoca,BecerraMohamed}.  The method also differs from the Kadec--Pe{\l}czy\'nski and submajorization techniques employed in symmetric operator spaces \cite{HuangNessipbayevSukochevXu}.

Beyond its intrinsic geometric interest, the Daugavet property has proved useful in the study of weakly compact and narrow operators, numerical index and lushness, tensor products, polynomial mappings, and the structure of operator algebras; see, for example, \cite{KadetsShvidkoySirotkinWerner,BecerraRodriguez,RuedaTradaceteVillanueva,MartinRueda,BecerraMartin,DantasMartinPerreau}.

Our main result is the following.

\begin{theorem}\label{thm:main}
Let $\mathcal M$ be a diffuse semifinite von Neumann algebra with a faithful normal semifinite trace $\tau$, and let $Y$ be a nonzero Banach space. Then
\[
L_1(\mathcal M,\tau)\widehat\otimes_\pi Y
\]
has the operator Daugavet property. Moreover, the witnessing operators in the definition of the ODP may be chosen with norm at most one.
\end{theorem}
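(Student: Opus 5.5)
The approach is to verify the definition of the ODP directly, building each witnessing operator as a \emph{corner pinching} plus a rank-one piece. Write $X=L_1(\mathcal M,\tau)\widehat\otimes_\pi Y$ and recall $X^*=\mathcal L(Y,\mathcal M)$. Fix $x_1,\dots,x_k\in S_X$, $x\in B_X$, a slice $S(\Phi,\delta)$ of $B_X$ with $\|\Phi\|=1$, and $\varepsilon>0$. We must produce $u\in S(\Phi,\delta)$ and $T\colon X\to X$ with $\|T\|\le1$, $\|Tx_i-x_i\|<\varepsilon$ for all $i$, and $Tu=x$.

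\emph{Step 1 (finite spectral truncation).} Approximate each $x_i$ within $\varepsilon/4$ by a finite tensor $\sum_j a_{ij}\otimes y_{ij}$. Using semifiniteness and spectral truncation, we may take each $a_{ij}=ea_{ij}e$, where $e$ is a single projection with $\tau(e)<\infty$ and every $a_{ij}\in\mathcal M$ is bounded. Let $C=\max_{i,j}\|a_{ij}\|_\infty\|y_{ij}\|$ and let $N$ be the total number of terms.

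\emph{Step 2 (localizing the slice on a small rectangular corner).} Let $\Phi$ correspond to $R\colon Y\to\mathcal M$ with $\|R\|=1$. Choose $y_0\in S_Y$ with $h:=Ry_0$ satisfying $\|h\|_\infty>1-\delta/2$, and choose $y_0^*\in S_{Y^*}$ with $y_0^*(y_0)=1$. Write the polar decomposition $h^*=w|h^*|$, and let $r$ be the spectral projection of $|h^*|$ for $[1-\delta/2,\infty)$; then $r\ne0$. Since $\mathcal M$ is diffuse and semifinite, we can choose a nonzero subprojection $q\le r$ with $\tau(q)$ as small as we like. Set $p=wqw^*$, so that $p\sim q$ and $\tau(p)=\tau(q)$. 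Put
\[
u=\tau(q)^{-1}\, q w^*\otimes y_0 .
\]
Then $\|u\|\le1$ and $\Phi(u)=\tau(q)^{-1}\tau(hqw^*)=\tau(q)^{-1}\tau(q|h^*|q)\ge1-\delta/2$, so $u$ lies in the slice. Moreover $u$ lives in the rectangular corner $q\,\mathcal M\,p$.

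\emph{Step 3 (the operator).} The map $P(a)=(1-q)a(1-p)+qap$ equals $\tfrac12\bigl(a+(2q-1)a(2p-1)\bigr)$. It is therefore a contraction on $L_1$, and indeed $\|(1-q)a(1-p)\|_1+\|qap\|_1\le\|a\|_1$. Define $\psi(a)=\tau(q)^{-1}\tau(w q a)$, so that $|\psi(qap)|\le\|qap\|_1$ and $\psi(qw^*)=\tau(q)$, giving $\psi\bigl(\tau(q)^{-1}qw^*\bigr)=1$. Set
\[
T(a\otimes y)=(1-q)a(1-p)\otimes y+\psi(qap)\,y_0^*(y)\,x .
\]
Bilinearity gives $\|T(a\otimes y)\|\le\|a\|_1\|y\|$, so $T$ extends to $X$ with $\|T\|\le1$. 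Since $(1-q)qw^*(1-p)=0$, we get $Tu=x$.

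\emph{Step 4 (finite-set stability; the main obstacle).} We need $T$ to almost fix the $x_i$, which means both corner pieces must be small on the truncated tensors. For bounded $a=eae$ we have $\|a-(1-q)a(1-p)\|_1\le\|qa\|_1+\|ap\|_1\le\|a\|_\infty\bigl(\tau(q)+\tau(p)\bigr)$, by Hölder's inequality; the same bound covers $|\psi(qap)|\le\|qap\|_1$. So if $\tau(q)<\varepsilon/(8NC(1+\|x\|))$, then $\|Tx_i-x_i\|<\varepsilon$ for every $i$. This is exactly where diffuseness is essential: we must find a nonzero $q$ that is simultaneously under the near-norming spectral projection $r$ and of arbitrarily small trace. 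This is the step I expect to need the most care. If $\tau(r)=\infty$, I would first cut $r$ by a finite-trace projection using semifiniteness, and then halve repeatedly using diffuse equal-trace decompositions. One also has to check that $p=wqw^*$ inherits the small trace, which holds because $q\le r$ is contained in the initial projection $w^*w$.
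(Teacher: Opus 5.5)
Your argument is correct after a few local repairs (listed below). From Step~3 on it is the paper's proof: your $T$ is the paper's operator $\Phi_{z'}$ with $(e,p,v)$ replaced by $(q,\,wqw^*,\,qw^*)$. Its norm is controlled by the same rectangular corner estimate (Lemma~\ref{lem:rectangular}), and your H\"older bound on bounded approximants is Lemma~\ref{lem:absolute-continuity}.

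The genuine difference is the slice localization. The paper works with $Z^*=\mathcal L(L_1,Y^*)$. It takes an almost norming $a\in S_{L_1}$ and truncates it spectrally. It then places the spectral projections of $|a|$ in a diffuse abelian subalgebra (Lemma~\ref{lem:diffuse-commuting}), so that an equal-trace partition $q=\sum_j p_j$ commutes with $|a|$ and the masses $\|ap_j\|_1=\tau(|a|p_j)$ add up to $1$. Finally it selects a good piece by averaging.

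You instead use $Z^*=\mathcal L(Y,\mathcal M)$ and localize at the top of the spectrum of the norming operator $h=Ry_0$. Since $r|h|r\ge(1-\delta/2)r$, every subprojection $q\le r$ satisfies $q|h|q\ge(1-\delta/2)q$. So no commutation, partition or averaging is needed, and the witness is an explicit normalized partial isometry. Diffuseness enters only through the elementary fact that a nonzero projection has nonzero subprojections of arbitrarily small trace. The paper's version additionally shows that suitably normalized pieces of any almost-norming element of $L_1$ already lie in the slice. Yours is shorter and avoids the commuting diffuse refinement.

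The repairs are as follows.

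(1) With $h^*=w|h^*|$ you have $h=|h^*|w^*$, so $\tau(hqw^*)=\tau(|h^*|w^*qw^*)$. This is not $\tau(q|h^*|q)$ and can vanish. For example, in $M_2\otimes L_\infty[0,1]$ with matrix units $E_{ij}$ and $h=E_{21}\otimes1$, one gets $w=E_{12}\otimes1$, $r=E_{22}\otimes1$, and $hq=0$ for every $q\le r$. The fix is to polar-decompose $h$ itself, $h=w|h|$, and let $r$ be the spectral projection of $|h|$ for $[1-\delta/2,\infty)$, with $\delta<2$. Then $q\le r\le w^*w$, $\|qw^*\|_1=\tau(wqw^*)=\tau(q)$, $qw^*\in q\mathcal Mp$, and $\tau(qw^*h)=\tau(q|h|q)\ge(1-\delta/2)\tau(q)$, which is what you need.

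(2) Take $\psi(a)=\tau(wqa)$, without the factor $\tau(q)^{-1}$. With the factor one only has $|\psi(qap)|\le\tau(q)^{-1}\|qap\|_1$, and $\|T\|$ is not controlled. The identities you actually use, $\psi(qw^*)=\tau(q)$ and $|\psi(qap)|\le\|qap\|_1$, are those of the unnormalized functional.

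(3) The inequality $\|(1-q)a(1-p)\|_1+\|qap\|_1\le\|a\|_1$ does not follow from the unitary-average formula alone, which only bounds the norm of the sum. You also need that the two corners have orthogonal left supports and orthogonal right supports, so that their absolute values add, as in the proof of Lemma~\ref{lem:rectangular}.

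(4) In the ODP the slice point must be fixed before the target $x$. Since $x$ enters your bound on $\tau(q)$ only through $\|x\|\le1$, replace $1+\|x\|$ by $2$, so that $u$ is independent of $x$.
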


The point is not merely the Daugavet property of the tensor product. That conclusion can also be approached through representability and centralizer methods \cite{BecerraRodriguez}. The stronger ODP assertion gives a uniform interpolation mechanism on finite sets and does not use the metric approximation property. This is relevant in view of recent tensor results for $L$-embedded spaces, where approximation assumptions enter through finite-rank norming and local reflexivity \cite{BecerraMohamed}.

The mechanism behind Theorem~\ref{thm:main} is elementary but genuinely noncommutative. If $e,p\in\mathcal M$ are projections, then
\[
\mathcal B_{e,p}(a)=eap,\qquad \mathcal C_{e,p}(a)=(1-e)a(1-p)
\]
satisfy
\begin{equation}\label{eq:corner-intro}
\|\mathcal B_{e,p}(a)\|_1+\|\mathcal C_{e,p}(a)\|_1\le \|a\|_1.
\end{equation}
Indeed, their sum is the average of $a$ and a two-sided unitary translate of $a$, while the two corners have orthogonal left and right supports. A finite trace corner of a slice element is then partitioned into many equal-trace pieces. Every piece is small on the prescribed finite set, and at least one normalized piece remains in the slice. Replacing that small corner by an arbitrary target produces the desired contractive operator.

All spaces are complex. The closed unit ball and the unit sphere of a Banach space $X$ are denoted by $B_X$ and $S_X$. A slice of $B_X$ is
\[
S(B_X,x^*,\alpha)=\{x\in B_X:\operatorname{Re}x^*(x)>1-\alpha\},\qquad x^*\in S_{X^*},\quad \alpha>0.
\]

\section{Preliminaries}

\begin{definition}[{\cite{RuedaTradaceteVillanueva}}]
A Banach space $X$ has the \emph{operator Daugavet property} if, for every $x_1,\ldots,x_n\in S_X$, every slice $S$ of $B_X$, and every $\varepsilon>0$, there is $x\in S$ such that for each $x'\in B_X$ one can find $T\in\mathcal L(X)$ satisfying
\[
\|T\|\le 1+\varepsilon,\qquad T(x)=x',\qquad \|T(x_j)-x_j\|<\varepsilon\quad (1\le j\le n).
\]
\end{definition}

The ODP implies the weak operator Daugavet property and the Daugavet property. We shall prove the stronger contractive form of Definition~2.1.

For Banach spaces $X$ and $Y$, let $X\otimes Y$ denote the algebraic
tensor product of $X$ and $Y$. The projective tensor product
$X\widehat{\otimes}_{\pi}Y$ is defined as the completion of $X\otimes Y$
with respect to the projective tensor norm
\[
\|z\|_{\pi}
=
\inf\left\{
\sum_{i=1}^{m}\|x_i\|\,\|y_i\|:
z=\sum_{i=1}^{m}x_i\otimes y_i
\right\},
\qquad z\in X\otimes Y.
\]
Let $\mathcal L(X,Y)$ denote the Banach space of all bounded linear
operators from $X$ to $Y$. It is standard that
\[
\bigl(X\widehat{\otimes}_{\pi}Y\bigr)^*
\cong
\mathcal L(X,Y^*)
\]
isometrically; see, for example, \cite{Ryan}. Under this identification,
if $G\in\mathcal L(X,Y^*)$, then
\[
G(x\otimes y)=G(x)(y),
\qquad x\in X,\ y\in Y.
\]
We shall also use the standard identity
\[
B_{X\widehat{\otimes}_{\pi}Y}
=
\overline{\operatorname{conv}}(B_X\otimes B_Y).
\]

Let $\mathcal M$ be semifinite with faithful normal semifinite trace $\tau$. We use the standard realization of $\mathcal M_*$ as $L_1(\mathcal M,\tau)$, with duality
\[
\langle a,b\rangle=\tau(ab),\qquad a\in L_1(\mathcal M,\tau),\ b\in\mathcal M.
\]
Every $a\in L_1(\mathcal M,\tau)$ has a polar decomposition $a=v|a|$ with $v\in\mathcal M$ a partial isometry. Left and right multiplication by contractions of $\mathcal M$ act contractively on $L_1(\mathcal M,\tau)$.

Recall that $\mathcal M$ is \emph{diffuse} if it has no nonzero minimal projection. Every nonzero corner of a diffuse von Neumann algebra is diffuse.

\begin{lemma}\label{lem:finite-spectral}
Let $a\in L_1(\mathcal M,\tau)$ and $\eta>0$. There is a finite-trace spectral projection $q$ of $|a|$ such that
\[
\|a-aq\|_1<\eta.
\]
If $a\ne0$, $q$ may be chosen with $aq\ne0$.
\end{lemma}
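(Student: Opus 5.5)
Take $q=\chi_{[\delta,\delta^{-1}]}(|a|)$ for small $\delta>0$, a spectral projection of $|a|$ cut off away from $0$ and $\infty$. Two things must be checked: $q$ has finite trace, and $aq\to a$ in $L_1$ as $\delta\to0$.

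\emph{Finite trace.} Since $|a|\ge\delta q$ and $q$ commutes with $|a|$,
\[
\delta\,\tau(q)\le\tau(q|a|q)=\tau(|a|q)\le\|a\|_1<\infty,
\]
so $\tau(q)\le\delta^{-1}\|a\|_1$.

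\emph{Approximation.} Write $a=v|a|$. Then
\[
a-aq=v|a|(1-q),
\]
and $v$ acts contractively on $L_1$, so
\[
\|a-aq\|_1\le\||a|(1-q)\|_1 .
\]
Because $1-q$ is a spectral projection of $|a|$, the element $|a|(1-q)=|a|\,\chi_{[0,\delta)\cup(\delta^{-1},\infty)}(|a|)$ is positive. Hence its norm is just a trace:
\[
\||a|(1-q)\|_1=\tau\bigl(|a|\chi_{[0,\delta)}(|a|)\bigr)+\tau\bigl(|a|\chi_{(\delta^{-1},\infty)}(|a|)\bigr).
\]
Now set $f_\delta(t)=t\,\chi_{[\delta,\delta^{-1}]}(t)$. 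As $\delta\downarrow0$, the operators $f_\delta(|a|)$ increase to $|a|\chi_{(0,\infty)}(|a|)=|a|$. By normality of $\tau$ (monotone convergence),
\[
\tau\bigl(f_\delta(|a|)\bigr)\uparrow\tau(|a|)=\|a\|_1<\infty .
\]
Therefore $\tau\bigl(|a|-f_\delta(|a|)\bigr)\to0$, and this quantity is exactly the tail expression above. Choosing $\delta$ small gives $\|a-aq\|_1<\eta$.

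The only delicate point is applying normality of $\tau$ to the spectral truncations of an unbounded $\tau$-measurable operator $|a|$. One can justify this via the spectral measure $\mu(B)=\tau(|a|\chi_B(|a|))$. This is a countably additive finite measure on $(0,\infty)$, by normality of $\tau$ applied to increasing sums of mutually orthogonal spectral pieces. Its total mass is $\|a\|_1$ and it does not charge $\{0\}$. The tails $\mu([0,\delta)\cup(\delta^{-1},\infty))$ then tend to $0$ by continuity of the measure.

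\emph{Nonzero case.} If $a\ne0$, we may also require $\eta<\|a\|_1$. Then
\[
\|aq\|_1\ge\|a\|_1-\|a-aq\|_1>0,
\]
so $aq\ne0$.
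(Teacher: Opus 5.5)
Your proof is correct and essentially the same as the paper's. The paper takes $q_m=\mathbf 1_{[1/m,m]}(|a|)$, bounds the trace via $\frac1m\tau(q_m)\le\tau(|a|q_m)\le\|a\|_1$, and gets $aq_m\to a$ by monotone convergence at $0$ and at $\infty$. Your explicit treatment of the case $a\ne0$ (taking $\eta<\|a\|_1$) fills in a step the paper leaves implicit.
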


\begin{proof}
For $m\ge1$, put $q_m=\mathbf 1_{[1/m,m]}(|a|)$. Since
\[
\frac1m\tau(q_m)\le \tau(|a|q_m)\le \|a\|_1,
\]
we have $\tau(q_m)<\infty$. Moreover, $|a|q_m\to |a|$ in $L_1$ by monotone convergence applied separately at zero and at infinity. Hence $aq_m\to a$ in $L_1$.
\end{proof}

We need a standard diffuse refinement of the abelian algebra generated by a positive operator.

\begin{lemma}\label{lem:diffuse-commuting}
Let $q\in\mathcal M$ be a nonzero finite-trace projection and let $d\in L_1(q\mathcal Mq,\tau)_+$ be supported by $q$. If $\mathcal M$ is diffuse, then there exists a diffuse abelian von Neumann subalgebra $A\subset q\mathcal Mq$ such that every spectral projection of $d$ belongs to $A$. Consequently, for each $N\in\mathbb N$ there are mutually orthogonal projections $p_1,\ldots,p_N\in A$ with
\[
q=p_1+\cdots+p_N,\qquad \tau(p_j)=\frac{\tau(q)}{N}.
\]
\end{lemma}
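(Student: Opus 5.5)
We first build a diffuse abelian subalgebra of $q\mathcal Mq$ containing every spectral projection of $d$, and then read off the equal-trace partition from it. Note that $q\mathcal Mq$ is a finite von Neumann algebra with faithful normal finite trace $\tau|_{q\mathcal Mq}$. It is diffuse, being a nonzero corner of the diffuse algebra $\mathcal M$.

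Let $A_0$ be the abelian von Neumann subalgebra of $q\mathcal Mq$ generated by the spectral projections of $d$ (together with $q$). By Zorn's lemma, choose a maximal abelian self-adjoint subalgebra $A$ of $q\mathcal Mq$ containing $A_0$. Such a masa exists: take a maximal element among abelian von Neumann subalgebras of $A_0'\cap q\mathcal Mq$ containing $A_0$. It is then maximal abelian in $q\mathcal Mq$, since anything commuting with it commutes with $A_0$.

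The key claim is that a masa $A$ in a diffuse algebra $\mathcal N=q\mathcal Mq$ is diffuse. Suppose $e\in A$ were a minimal projection of $A$. Then $eAe=\mathbb Ce$. Moreover $e\mathcal Ne\cap A'\supseteq$ the relative commutant of $eAe$ in $e\mathcal Ne$, which is all of $e\mathcal Ne$. Any $x\in e\mathcal Ne$ commutes with $A$: for $a\in A$ we have $ae=ea$ and $eae=\lambda e$, so $ax=aex=eaex=\lambda x$, and similarly $xa=\lambda x$. Maximality then forces $e\mathcal Ne\subseteq A$, hence $e\mathcal Ne=\mathbb Ce$. So $e$ is a minimal projection in $\mathcal N$, hence in $\mathcal M$, contradicting diffuseness. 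This step is the main point; it needs only the standard identification of relative commutants in corners.

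For the partition, consider the finite measure on the projection lattice of $A$ given by $\tau$. Since $A\cong L_\infty(\Omega,\mu)$ with $\mu$ atomless and finite, total mass $\tau(q)<\infty$, the Sierpiński intermediate value theorem gives the decomposition. One can also argue operator-theoretically. Take a maximal increasing chain of projections in $A$ and obtain a projection $r\le q$ in $A$ with $\tau(r)=t$ for any prescribed $t\in[0,\tau(q)]$. Atomlessness plus normality of $\tau$ give a continuous "distribution" of $\tau$ along the chain. Iterating this, choose $p_1\in A$ with $\tau(p_1)=\tau(q)/N$. Then apply the same argument in the diffuse abelian algebra $(q-p_1)A$ to get $p_2$, and so on. Set $p_N=q-(p_1+\dots+p_{N-1})$. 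This yields mutually orthogonal projections in $A$ summing to $q$, each of trace $\tau(q)/N$.
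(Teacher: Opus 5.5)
Your proof is correct, but it builds the diffuse abelian algebra by a different route than the paper.

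The paper works with $A_0$ itself. It observes that the atoms of $A_0$ are spectral projections $e$ on which $d$ is scalar, keeps the continuous spectral part of $A_0$ unchanged, picks a diffuse abelian subalgebra inside each diffuse corner $e\mathcal Me$, and adjoins these.

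You instead enlarge $A_0$ in one step to a maximal abelian subalgebra $A$ of $q\mathcal Mq$. You then show that a masa in a diffuse algebra has no minimal projections. Your computation $ax=\lambda x=xa$ for $x\in e\mathcal Ne$ does the real work, and it is right. The sentence just before it about relative commutants is garbled and can be dropped.

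Your route is uniform and self-contained. It also supplies the step the paper leaves unjustified: the existence of a diffuse abelian subalgebra in each diffuse corner $e\mathcal Me$ is exactly your masa lemma applied to $e\mathcal Me$. The paper's route gives a more concrete description of $A$, since only the eigenspaces of $d$ are refined, but it assumes that existence fact without proof.

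The final equal-trace partition, via Sierpi\'nski's theorem for the atomless finite measure $\tau|_A$, is the same in both. Your alternative chain-of-projections sketch is vaguer than needed and can be omitted, since the measure-theoretic argument suffices.
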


\begin{proof}
The abelian algebra generated by the spectral projections of $d$ can have atoms only on spectral subspaces on which $d$ is scalar. Each such nonzero spectral corner is diffuse, so inside it one may choose a diffuse abelian von Neumann algebra. Adjoining these algebras to the continuous spectral part gives the required $A$. The restriction of $\tau$ to $A$ is a faithful atomless finite measure; the last assertion is the usual equal-measure partition property.
\end{proof}
\Needspace{18\baselineskip}
	\begin{lemma}\label{lem:absolute-continuity}
		For each finite set $F\subset L_1(M,\tau)$ and each $\eta>0$, there
		exists $\delta>0$ such that
		\[
		\tau(p)<\delta
		\quad\Longrightarrow\quad
		\|ap\|_1+\|pa\|_1<\eta
		\]
		for every projection $p\in M$ and every $a\in F$.
	\end{lemma}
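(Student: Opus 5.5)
It suffices to treat a single $a\in L_1(\mathcal M,\tau)$, and then take the minimum of the finitely many resulting $\delta$'s. By symmetry it is enough to bound $\|ap\|_1$: since $\|pa\|_1=\|a^*p\|_1$ and $a^*\in L_1(\mathcal M,\tau)$, applying the same bound to $a^*$ handles the second term. The plan is to split $a$ into a piece that is bounded with support of finite trace and an $L_1$-small remainder. On the bounded piece, $\|\cdot\,p\|_1$ is controlled by $\tau(p)$ through a Hölder-type estimate.

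\textbf{Step 1 (truncation).} Fix $\eta>0$ and write $a=v|a|$ for the polar decomposition. By monotone convergence in the spectral resolution of $|a|$, as in the proof of Lemma~\ref{lem:finite-spectral}, choose $m$ with $q=\mathbf 1_{[1/m,m]}(|a|)$ satisfying $\|a-aq\|_1<\eta/4$. Set $b=aq=v|a|q$. Then $b\in\mathcal M$, because $|a|q\le m\,q$ is bounded, and
\[
\|b\|_\infty\le m .
\]

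\textbf{Step 2 (the bounded piece).} For any projection $p$,
\[
\|bp\|_1\le \|b\|_\infty\,\|p\|_1=\|b\|_\infty\,\tau(p)\le m\,\tau(p).
\]
This is Hölder's inequality $\|xy\|_1\le\|x\|_\infty\|y\|_1$ in $L_1(\mathcal M,\tau)$. It is the same fact the paper already uses, namely that multiplication by elements of $\mathcal M$ acts boundedly on $L_1$; here it is applied with the roles of the factors arranged so that $p$ supplies the $L_1$-norm. Note that the finiteness of $\tau(q)$ is not even needed at this point.

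\textbf{Step 3 (combining).} The remainder satisfies
\[
\|(a-b)p\|_1\le\|a-b\|_1\,\|p\|_\infty\le\|a-b\|_1<\eta/4,
\]
since right multiplication by the contraction $p$ is contractive on $L_1$. Hence
\[
\|ap\|_1\le \eta/4+m\,\tau(p).
\]
Now take $\delta=\eta/(4m)$. Then $\tau(p)<\delta$ gives $\|ap\|_1<\eta/2$. Running the same argument for $a^*$ with its own constant $m'$, and taking $\delta$ to be the minimum of the two values, gives $\|pa\|_1<\eta/2$ as well, so $\|ap\|_1+\|pa\|_1<\eta$.

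The only point that needs care is Step~1: one needs the cut-off at large values of $|a|$, not only at small ones, so that $b$ is genuinely bounded. If $\mathcal M$ is semifinite but $|a|$ is unbounded, the truncation $\mathbf 1_{[0,m]}(|a|)$ alone already suffices. This is because $\||a|\mathbf 1_{(m,\infty)}(|a|)\|_1=\tau\bigl(|a|\mathbf 1_{(m,\infty)}(|a|)\bigr)\to0$ by normality of $\tau$. So the main obstacle is just justifying this tail convergence, which is monotone convergence for the normal trace.
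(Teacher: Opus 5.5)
Your proof is correct and follows essentially the paper's argument. You approximate $a$ in $L_1$ by a bounded element $b$, bound the bounded part by $\|b\|_\infty\tau(p)$ via H\"older, and absorb the remainder using contractivity of multiplication by $p$. The differences are minor: your explicit choice $b=a\mathbf 1_{[1/m,m]}(|a|)$ actually justifies the density fact the paper cites in a footnote, and the passage to $a^*$ is unnecessary, since the same $b$ already gives $\|pb\|_1\le\|b\|_\infty\tau(p)$ and $\|p(a-b)\|_1\le\|a-b\|_1$.
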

	
	\begin{proof}
		It suffices to consider one $a$. Choose $b\in M\cap L_1(M,\tau)$ with
		$\|a-b\|_1<\eta/4$.\footnote{Here we use the standard fact that
			$M\cap L_1(M,\tau)$ is norm dense in $L_1(M,\tau)$ in the semifinite
			setting.}
		By H\"older's inequality,
		\[
		\|bp\|_1\leq \|bp\|_2\tau(p)^{1/2}
		\leq \|b\|_\infty\tau(p),
		\]
		and the same estimate holds for $pb$. Hence
		\[
		\|ap\|_1+\|pa\|_1
		\leq
		2\|a-b\|_1+2\|b\|_\infty\tau(p),
		\]
		which is less than $\eta$ for sufficiently small $\tau(p)$.
		Take the minimum of the resulting constants over $F$.
	\end{proof}

\section{The operator Daugavet property and consequences}

For projections $e,p\in\mathcal M$, define
\[
\mathcal B_{e,p}(a)=eap,\qquad \mathcal C_{e,p}(a)=(1-e)a(1-p)\qquad (a\in L_1(\mathcal M,\tau)).
\]

\begin{lemma}\label{lem:rectangular}
For every $a\in L_1(\mathcal M,\tau)$,
\begin{equation}\label{eq:corner}
\|\mathcal B_{e,p}(a)\|_1+\|\mathcal C_{e,p}(a)\|_1\le \|a\|_1.
\end{equation}
Consequently, for every Banach space $Y$ and every $z\in L_1(\mathcal M,\tau)\widehat\otimes_\pi Y$,
\begin{equation}\label{eq:corner-tensor}
\|(\mathcal B_{e,p}\otimes I_Y)z\|_\pi+\|(\mathcal C_{e,p}\otimes I_Y)z\|_\pi\le \|z\|_\pi.
\end{equation}
\end{lemma}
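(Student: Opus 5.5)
The plan is to prove \eqref{eq:corner} by realizing $\mathcal B_{e,p}+\mathcal C_{e,p}$ as an average of two contractions, and then using a norming element of $\mathcal M$ to split the norm of the sum into the two pieces. Put $u=2e-1$ and $w=2p-1$; these are self-adjoint unitaries in $\mathcal M$. A direct expansion gives
\[
\tfrac12\bigl(a+uaw\bigr)=eap+(1-e)a(1-p)=\mathcal B_{e,p}(a)+\mathcal C_{e,p}(a),
\]
because the cross terms $ea(1-p)$ and $(1-e)ap$ cancel. Since two-sided multiplication by unitaries is an isometry of $L_1(\mathcal M,\tau)$, we get $\|\mathcal B_{e,p}(a)+\mathcal C_{e,p}(a)\|_1\le\|a\|_1$.

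It then remains to show that the two corners add in norm:
\[
\|eap+(1-e)a(1-p)\|_1=\|eap\|_1+\|(1-e)a(1-p)\|_1 .
\]
Take polar decompositions $eap=v_1|eap|$ and $(1-e)a(1-p)=v_2|(1-e)a(1-p)|$. We have $v_1=ev_1p$ and $v_2=(1-e)v_2(1-p)$, since the left support of $eap$ lies under $e$ and its right support under $p$, and likewise for the second corner. Put $b=v_1^*+v_2^*$. Then $b=pv_1^*e+(1-p)v_2^*(1-e)$ is a partial isometry with orthogonal pieces, so $\|b\|_\infty\le1$. Moreover
\[
\tau\bigl(b(eap+(1-e)a(1-p))\bigr)=\tau(v_1^*eap)+\tau(v_2^*(1-e)a(1-p)),
\]
because the cross terms vanish: for example $v_1^*(1-e)=pv_1^*e(1-e)=0$. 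The right-hand side equals $\|eap\|_1+\|(1-e)a(1-p)\|_1$. Hölder's inequality now gives ``$\ge$'', and the reverse inequality is the triangle inequality. Combined with the first paragraph, this yields \eqref{eq:corner}.

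For \eqref{eq:corner-tensor}, both maps are contractions, so $\mathcal B\otimes I_Y$ and $\mathcal C\otimes I_Y$ extend to contractions on the projective tensor product. Moreover, $\mathcal B\otimes I_Y+\mathcal C\otimes I_Y=\tfrac12\bigl(I+(L_uR_w)\otimes I_Y\bigr)$ is also a contraction. What is still needed is additivity of the norms of the two pieces. Here the main obstacle is that the projective norm does not obviously split, so I will argue by duality. Given $z$ and $\varepsilon>0$, choose norming operators $G_1,G_2\in\mathcal L(L_1,Y^*)$ of norm one for $(\mathcal B\otimes I)z$ and $(\mathcal C\otimes I)z$ respectively. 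Define
\[
G=G_1\circ\mathcal B_{e,p}+G_2\circ\mathcal C_{e,p}.
\]
Since $\mathcal B$ and $\mathcal C$ are idempotent, $G(\mathcal B\otimes I)z=G_1(\mathcal B\otimes I)z$ and $G(\mathcal C\otimes I)z=G_2(\mathcal C\otimes I)z$. The remaining task is $\|G\|\le1$. For $a\in L_1$ we have
\[
\|G(a)\|\le\|G_1\|\,\|\mathcal B(a)\|_1+\|G_2\|\,\|\mathcal C(a)\|_1\le\|a\|_1
\]
by \eqref{eq:corner}. Hence
\[
\|z\|_\pi\ge|G(z)|=\|(\mathcal B\otimes I)z\|_\pi+\|(\mathcal C\otimes I)z\|_\pi,
\]
where the last equality uses the choice of $G_1,G_2$ up to $\varepsilon$ and rotating them by unimodular scalars so that both values are real and positive. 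Letting $\varepsilon\to0$ gives \eqref{eq:corner-tensor}.
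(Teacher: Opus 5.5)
Your proof is correct. For \eqref{eq:corner} you share the paper's first step, the unitary average $\tfrac12(a+uaw)$ with $u=2e-1$, $w=2p-1$. You differ only in how you exploit the orthogonality of the corners. The paper notes that $\mathcal B_{e,p}(a)^*\mathcal C_{e,p}(a)=0=\mathcal C_{e,p}(a)^*\mathcal B_{e,p}(a)$ and that $|\mathcal B_{e,p}(a)|$, $|\mathcal C_{e,p}(a)|$ live under $p$ and $1-p$. Hence $|\mathcal B_{e,p}(a)+\mathcal C_{e,p}(a)|=|\mathcal B_{e,p}(a)|+|\mathcal C_{e,p}(a)|$, and one takes traces. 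You instead norm the sum by the partial isometry $v_1^*+v_2^*$ and invoke H\"older. This is the dual form of the same fact: you trade the square-root identity for the support bookkeeping of two polar decompositions.

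The genuine divergence is in \eqref{eq:corner-tensor}. The paper stays on the primal side. For any representation $z=\sum_j a_j\otimes y_j$ one has $(\mathcal B_{e,p}\otimes I_Y)z=\sum_j\mathcal B_{e,p}(a_j)\otimes y_j$, and likewise for $\mathcal C_{e,p}$. So the two projective norms add up to at most $\sum_j\bigl(\|\mathcal B_{e,p}(a_j)\|_1+\|\mathcal C_{e,p}(a_j)\|_1\bigr)\|y_j\|\le\sum_j\|a_j\|_1\|y_j\|$. Taking the infimum and passing to the completion finishes the proof. Contrary to your remark, the projective norm therefore splits directly, and you need neither Hahn--Banach nor the identification $(X\widehat\otimes_\pi Y)^*\cong\mathcal L(X,Y^*)$. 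Your construction $G=G_1\circ\mathcal B_{e,p}+G_2\circ\mathcal C_{e,p}$ is a valid but heavier alternative. Like the paper's estimate, it uses nothing beyond the pointwise inequality \eqref{eq:corner}.

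A few clean-ups:
\begin{itemize}
\item The identity you actually need, $G(z)=G_1\bigl((\mathcal B_{e,p}\otimes I_Y)z\bigr)+G_2\bigl((\mathcal C_{e,p}\otimes I_Y)z\bigr)$, follows straight from the definition of $G$: check it on elementary tensors and extend by continuity. So the appeal to idempotency is beside the point. The identities you state there would also require $\mathcal C_{e,p}\mathcal B_{e,p}=0=\mathcal B_{e,p}\mathcal C_{e,p}$.
\item The contractivity of $\mathcal B_{e,p}\otimes I_Y+\mathcal C_{e,p}\otimes I_Y$ is never used.
\item The $\varepsilon$ can be dropped, since Hahn--Banach gives exactly norming functionals.
\end{itemize}
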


\begin{proof}
Put $u=2e-1$ and $w=2p-1$. These are self-adjoint unitaries and
\[
\mathcal B_{e,p}(a)+\mathcal C_{e,p}(a)=\frac12(a+uaw).
\]
Thus the sum of the two corners has norm at most $\|a\|_1$. Moreover,
\[
\mathcal B_{e,p}(a)^*\mathcal C_{e,p}(a)=0=\mathcal C_{e,p}(a)^*\mathcal B_{e,p}(a),
\]
and the positive operators $|\mathcal B_{e,p}(a)|$ and $|\mathcal C_{e,p}(a)|$ have orthogonal supports, respectively dominated by $p$ and $1-p$. Hence
\[
|\mathcal B_{e,p}(a)+\mathcal C_{e,p}(a)|=|\mathcal B_{e,p}(a)|+|\mathcal C_{e,p}(a)|,
\]
which proves \eqref{eq:corner}.

For $z=\sum_{j=1}^m a_j\otimes y_j$, the projective norm gives
\begin{align*}
\|(\mathcal B_{e,p}\otimes I_Y)z\|_\pi+\|(\mathcal C_{e,p}\otimes I_Y)z\|_\pi
&\le \sum_{j=1}^m\bigl(\|\mathcal B_{e,p}(a_j)\|_1+\|\mathcal C_{e,p}(a_j)\|_1\bigr)\|y_j\|\\
&\le \sum_{j=1}^m\|a_j\|_1\|y_j\|.
\end{align*}
Taking the infimum over all representations and then passing to the completion proves \eqref{eq:corner-tensor}.
\end{proof}

The following is the slice-localization lemma.

\begin{lemma}\label{lem:small-slice}
Let $G\in S_{\mathcal L(L_1(\mathcal M,\tau),Y^*)}$, let $\alpha>0$, let $F\subset L_1(\mathcal M,\tau)$ be finite, and let $\eta>0$. There exist $k\in S_{L_1(\mathcal M,\tau)}$, $y_0\in S_Y$, a partial isometry $v\in\mathcal M$, and projections $p,e\in\mathcal M$ such that
\begin{enumerate}
\item[(i)] $\operatorname{Re}G(k,y_0)>1-\alpha$;
\item[(ii)] $k=ekp$ and $e=vpv^*$;
\item[(iii)] $\tau(p)=\tau(e)<\infty$ and
$
\|ap\|_1+\|ea\|_1<\eta\qquad (a\in F);
$
\item[(iv)] $\tau(kv^*)=1$.
\end{enumerate}
\end{lemma}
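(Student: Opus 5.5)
Since $\|G\|=1$ and $B_{L_1\widehat\otimes_\pi Y}=\overline{\operatorname{conv}}(B_{L_1}\otimes B_Y)$, we can pick $a_0\in S_{L_1(\mathcal M,\tau)}$ and $y_0\in S_Y$ with $\operatorname{Re}G(a_0,y_0)>1-\alpha/4$. By Lemma~\ref{lem:finite-spectral}, replacing $a_0$ by $a_0q/\|a_0q\|_1$ for a suitable finite-trace spectral projection $q$ of $|a_0|$ keeps this inequality, with $\alpha/2$ in place of $\alpha/4$. So we may assume $a_0=a_0q$, where $\tau(q)<\infty$ and $q$ commutes with $|a_0|$.

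Next write the polar decomposition $a_0=v|a_0|$ and set $d=|a_0|$, which is supported by $q$; we may take $v^*v=s(d)\le q$. Apply Lemma~\ref{lem:diffuse-commuting} to $d$ to obtain projections $p_1,\dots,p_N$ in a diffuse abelian algebra $A\ni$ (spectral projections of $d$), with $\sum p_j=q$ and $\tau(p_j)=\tau(q)/N$. These $p_j$ commute with $d$, so
\[
a_0=\sum_j v d p_j,\qquad \|a_0\|_1=\sum_j\tau(dp_j)=\sum_j\|vdp_j\|_1 .
\]
Then
\[
\operatorname{Re}\sum_j G(vdp_j,y_0)>(1-\alpha/2)\sum_j\|vdp_j\|_1,
\]
so some index $j$ with $dp_j\neq0$ satisfies $\operatorname{Re}G(vdp_j,y_0)>(1-\alpha/2)\|vdp_j\|_1$. (If $dp_j=0$ the term vanishes on both sides, so the averaging argument still works.)

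Put $p=s(dp_j)\le p_j$, which is the support of $dp_j$; it lies in $A$ because $dp_j\in A$. Define $k=vdp_j/\|vdp_j\|_1$, and replace $v$ by $vp$, still denoted $v$. This gives (i), with $\operatorname{Re}G(k,y_0)>1-\alpha$. Let $e=vpv^*$. Since $p\le s(d)=v^*v$, the element $vp$ is a partial isometry with initial projection $p$ and final projection $e$. Hence $\tau(e)=\tau(p)\le\tau(q)/N<\infty$, and $k=ekp$, which is (ii). For (iv), note $kv^*=vdp\,pv^*/\|\cdot\|$, so $\tau(kv^*)=\tau(dp)/\tau(dp)=1$ by traciality.

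For (iii), apply Lemma~\ref{lem:absolute-continuity} to $F$ and $\eta$ to get $\delta$, and choose $N$ with $\tau(q)/N<\delta$. Then $\tau(p),\tau(e)<\delta$, so $\|ap\|_1<\eta/2$ and $\|ea\|_1<\eta/2$ for $a\in F$. To do this properly, apply the lemma with $\eta/2$, or note that it already bounds $\|ap\|_1+\|pa\|_1$ for any projection of small trace and use it once for $p$ and once for $e$.

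The main obstacle is the pigeonhole step. It requires $d$ to commute with the partition so that $\|a_0\|_1$ splits additively into the pieces $\|vdp_j\|_1$. This is exactly why Lemma~\ref{lem:diffuse-commuting} produces the $p_j$ inside an abelian algebra containing the spectral projections of $d$. A second point needing care is to choose $v$ and $p$ so that $vp$ is a genuine partial isometry with $e=vpv^*$ equivalent to $p$; taking $p$ to be the support of $dp_j$ handles this.
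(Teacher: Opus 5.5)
Your proof is correct and matches the paper's argument step for step: a finite spectral truncation, an equal-trace partition inside the diffuse abelian algebra of Lemma~\ref{lem:diffuse-commuting}, a pigeonhole choice of a piece with $\operatorname{Re}G(vdp_j,y_0)>(1-\alpha/2)\tau(dp_j)$, and Lemma~\ref{lem:absolute-continuity} for the small-trace estimates. The differences are cosmetic: the paper arranges $q=\operatorname{supp}|a|$ (automatic for $q=\mathbf 1_{[1/m,m]}(|a|)$) and so takes $p=p_j$ and keeps $v$, whereas your passage to $p=s(dp_j)$ and $vp$ also covers $s(d)\le q$, and your use of $\eta/2$ makes the bound in (iii) explicit.
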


\begin{proof}
Choose $a\in S_{L_1(\mathcal M,\tau)}$ and $y_0\in S_Y$ so that
\[
\operatorname{Re}G(a,y_0)>1-\alpha/4.
\]
Write $a=v|a|$. By Lemma~\ref{lem:finite-spectral}, after replacing $a$ by a normalized finite spectral truncation and decreasing the error if necessary, we may assume that $q=\operatorname{supp}|a|$ satisfies $0<\tau(q)<\infty$ and
\begin{equation}\label{eq:slice-half}
\operatorname{Re}G(a,y_0)>1-\alpha/2.
\end{equation}
Here $v^*v=q$.

Apply Lemma~\ref{lem:diffuse-commuting} to $|a|$. For an integer $N$ to be chosen, take an equal-trace partition $q=\sum_{j=1}^N p_j$ in the resulting diffuse abelian algebra. Put
\[
\mu_j=\|ap_j\|_1=\tau(|a|p_j),\qquad \nu_j=G(ap_j,y_0).
\]
We have $\sum_j\mu_j=1$, $\sum_j\operatorname{Re}\nu_j>1-\alpha/2$, and $|\nu_j|\le\mu_j$. Therefore, for at least one $j$ with $\mu_j>0$,
\[
\frac{\operatorname{Re}\nu_j}{\mu_j}>1-\alpha/2>1-\alpha.
\]
Set $p=p_j$, $e=vpv^*$, and $k=ap/\mu_j$. Since $p$ commutes with $|a|$, the polar decomposition of $ap$ is $vp|a|p$. It follows that $k=ekp$ and
\[
\tau(kv^*)=\frac{\tau(vp|a|v^*)}{\mu_j}=\frac{\tau(p|a|v^*v)}{\mu_j}=1.
\]
Also $\tau(e)=\tau(p)=\tau(q)/N$. By Lemma~\ref{lem:absolute-continuity}, choosing $N$ sufficiently large makes
\[
\|ap\|_1+\|ea\|_1<\eta
\]
for every $a\in F$. The slice estimate follows from the choice of $p$.
\end{proof}

We now prove the main theorem.

\begin{proof}[Proof of Theorem~\ref{thm:main}]
Put $X=L_1(\mathcal M,\tau)$ and $Z=X\widehat\otimes_\pi Y$. Fix
\[
z_1,\ldots,z_n\in S_Z,\qquad \varepsilon>0,\qquad S=S(B_Z,G,\alpha),
\]
where $G\in S_{Z^*}=S_{\mathcal L(X,Y^*)}$ and $\alpha>0$.

Choose algebraic tensors
\[
w_i=\sum_{j=1}^{m_i}a_{ij}\otimes y_{ij}\qquad (1\le i\le n)
\]
such that $\|z_i-w_i\|_\pi<\gamma$, where $\gamma>0$ will be fixed below. Let
\[
F=\{a_{ij}:1\le i\le n,\ 1\le j\le m_i\}.
\]
Apply Lemma~\ref{lem:small-slice}, with a sufficiently small parameter $\eta$, to obtain $k,y_0,v,p,e$. Write
\[
B=\mathcal B_{e,p},\qquad C=\mathcal C_{e,p}.
\]
Then $k\otimes y_0\in S$.

Choose $y_0^*\in S_{Y^*}$ with $y_0^*(y_0)=1$, and define $H\in Z^*$ by
\[
H(a\otimes y)=\tau(av^*)y_0^*(y).
\]
Clearly $\|H\|\le1$. For an arbitrary $z'\in B_Z$, define
\begin{equation}\label{eq:phi}
\Phi_{z'}(z)=(C\otimes I_Y)z+H\bigl((B\otimes I_Y)z\bigr)z',\qquad z\in Z.
\end{equation}
Lemma~\ref{lem:rectangular} gives
\begin{align*}
\|\Phi_{z'}(z)\|_\pi
&\le \|(C\otimes I_Y)z\|_\pi+\|(B\otimes I_Y)z\|_\pi\|z'\|_\pi\\
&\le \|z\|_\pi.
\end{align*}
Thus $\|\Phi_{z'}\|\le1$. Since $Bk=k$, $Ck=0$, and $H(k\otimes y_0)=\tau(kv^*)y_0^*(y_0)=1$, we have
\[
\Phi_{z'}(k\otimes y_0)=z'.
\]

It remains to verify the almost-fixing condition. For $a\in F$,
\[
\|a-Ca\|_1=\|ea+(1-e)ap\|_1\le\|ea\|_1+\|ap\|_1,
\]
\[
\|Ba\|_1\le\min\{\|ea\|_1,\|ap\|_1\}.
\]
Consequently, by choosing $\eta$ small enough in Lemma~\ref{lem:small-slice},
\[
\|(I-C)\otimes I_Y(w_i)\|_\pi+\|B\otimes I_Y(w_i)\|_\pi<\varepsilon/2\qquad (1\le i\le n).
\]
Since $B$ and $C$ are contractions,
\begin{align*}
\|\Phi_{z'}(z_i)-z_i\|_\pi
&\le \|((C-I)\otimes I_Y)z_i\|_\pi+\|(B\otimes I_Y)z_i\|_\pi\\
&\le 3\gamma+\|((C-I)\otimes I_Y)w_i\|_\pi+\|(B\otimes I_Y)w_i\|_\pi.
\end{align*}
Taking $\gamma<\varepsilon/6$ makes the last expression smaller than $\varepsilon$. The point $k\otimes y_0\in S$ is independent of $z'$, while \eqref{eq:phi} works for every $z'\in B_Z$. This is precisely the ODP.
\end{proof}

\begin{remark}
The proof gives more than required: the witnessing point is elementary and every target is reached exactly by a contraction. No finite-rank approximation, local reflexivity, or approximation property of either factor is used.
\end{remark}

Combining Theorem~\ref{thm:main} with Oikhberg's characterization of the Daugavet property of von Neumann preduals gives a clean algebraic characterization.

\begin{corollary}\label{cor:characterization}
Let $\mathcal M$ be a semifinite von Neumann algebra with faithful normal semifinite trace $\tau$. The following assertions are equivalent:
\begin{enumerate}
\item[(i)] $\mathcal M$ is diffuse;
\item[(ii)] $L_1(\mathcal M,\tau)$ has the ODP;
\item[(iii)] for every nonzero Banach space $Y$, the space $L_1(\mathcal M,\tau)\widehat\otimes_\pi Y$ has the ODP.
\end{enumerate}
\end{corollary}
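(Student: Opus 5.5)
We prove the corollary through the cycle of implications (i)$\Rightarrow$(iii)$\Rightarrow$(ii)$\Rightarrow$(i).

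The implication (i)$\Rightarrow$(iii) is exactly Theorem~\ref{thm:main}.

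For (iii)$\Rightarrow$(ii), specialize (iii) to $Y=\mathbb C$. The map $a\otimes\lambda\mapsto\lambda a$ extends to an isometric isomorphism
\[
L_1(\mathcal M,\tau)\widehat\otimes_\pi\mathbb C\cong L_1(\mathcal M,\tau),
\]
since $\|\sum_i a_i\otimes\lambda_i\|_\pi=\|\sum_i\lambda_i a_i\|_1$. The ODP is defined purely in terms of the norm, slices, and bounded operators, so it is invariant under isometric isomorphisms. Hence $L_1(\mathcal M,\tau)$ has the ODP.

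For (ii)$\Rightarrow$(i), we chain two facts. First, the ODP implies the Daugavet property; this is recorded after Definition~2.1 and comes from \cite{RuedaTradaceteVillanueva}. Second, Oikhberg's theorem \cite{Oikhberg} states that the predual $\mathcal M_*$ has the Daugavet property precisely when $\mathcal M$ is diffuse. Since $L_1(\mathcal M,\tau)$ is isometric to $\mathcal M_*$ via the trace duality $\langle a,b\rangle=\tau(ab)$, the Daugavet property of $L_1(\mathcal M,\tau)$ forces $\mathcal M$ to be diffuse.

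If one prefers not to invoke Oikhberg's theorem for this direction, the contrapositive can be proved directly. Suppose $\mathcal M$ has a minimal projection $e$. Then $e\mathcal M e=\mathbb C e$, and the functional
\[
\varphi(a)=\tau(ea)/\tau(e)
\]
on $L_1(\mathcal M,\tau)$ is well defined, since $\tau(e)<\infty$ for a minimal projection under a faithful semifinite trace. The rank-one operator $T(a)=-\varphi(a)\,e/\tau(e)$ then fails the Daugavet equation; a natural way to see this is to observe that $e/\tau(e)$ is a denting point of the unit ball. This is the only step where any real work could arise, and citing \cite{Oikhberg} avoids it.
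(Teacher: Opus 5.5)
Your proof is correct and follows the paper's argument: (i)$\Rightarrow$(iii) is Theorem~\ref{thm:main}, (iii)$\Rightarrow$(ii) comes from specializing to $Y=\mathbb C$, and (ii)$\Rightarrow$(i) combines ``ODP implies the Daugavet property'' with Oikhberg's characterization of von Neumann preduals with the Daugavet property. Your extra details, namely the isometry $L_1(\mathcal M,\tau)\widehat\otimes_\pi\mathbb C\cong L_1(\mathcal M,\tau)$ and the optional sketch that $e/\tau(e)$ is a denting point for a minimal projection $e$, are sound but not needed beyond what the paper already uses.
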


\begin{proof}
Theorem~\ref{thm:main} gives (i)$\Rightarrow$(iii), and (iii)$\Rightarrow$(ii) follows by taking $Y=\mathbb C$. Finally, the ODP implies the Daugavet property, whereas $L_1(\mathcal M,\tau)=\mathcal M_*$ has the Daugavet property only if $\mathcal M$ is diffuse \cite{Oikhberg}. Thus (ii)$\Rightarrow$(i).
\end{proof}

\begin{corollary}\label{cor:dual}
For every nonzero Banach space $Y$, the dual space
\[
\mathcal L(L_1(\mathcal M,\tau),Y^*)
\]
has a weak-star octahedrality consequence dual to the Daugavet property of $L_1(\mathcal M,\tau)\widehat\otimes_\pi Y$.
\end{corollary}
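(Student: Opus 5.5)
The plan is to read the vague statement in its standard form and derive it from the known duality between the Daugavet property and weak-star octahedrality. A space $Z$ has the Daugavet property if and only if every weak-star slice of $B_{Z^*}$ is controlled in the following sense: for every $G_1,\ldots,G_n\in S_{Z^*}$, every weak-star slice $S(B_{Z^*},z,\alpha)$ with $z\in S_Z$, and every $\varepsilon>0$, there is $G\in S(B_{Z^*},z,\alpha)$ with $\|G_i+G\|>2-\varepsilon$ for all $i$. The same duality gives the weaker statement that the norm of $Z^*$ is weak-star octahedral.

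So I would first make the statement precise. I take it to mean that $\mathcal L(L_1(\mathcal M,\tau),Y^*)$, viewed as the dual of $Z=L_1(\mathcal M,\tau)\widehat\otimes_\pi Y$, has the above weak-star property. In particular, its norm is weak-star octahedral.

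The argument then proceeds in three steps.

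\begin{enumerate}
\item[(1)] Theorem~\ref{thm:main} gives the ODP for $Z$, and the ODP implies the Daugavet property, as recorded after Definition~2.1.
\item[(2)] Dualize via the isometric identification $Z^*\cong\mathcal L(L_1(\mathcal M,\tau),Y^*)$ recalled in the Preliminaries. Under this identification, weak-star slices of $B_{Z^*}$ determined by elements $z\in S_Z$ are exactly the sets $\{G:\operatorname{Re}G(z)>1-\alpha\}$ with $G$ an operator of norm at most one.
\item[(3)] Apply the standard slice characterization of the Daugavet property, namely $\|I+T\|=1+\|T\|$ for rank-one $T=z^*\otimes z$. If the finite-set version is needed, it can also be obtained directly from the ODP.
\end{enumerate}

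For a self-contained route to (3), one could argue directly from the ODP. Given $G_1,\ldots,G_n\in S_{Z^*}$, a point $z\in S_Z$ and $\varepsilon>0$, first pick $x_i\in S_Z$ with $\operatorname{Re}G_i(x_i)>1-\varepsilon$. Then run the ODP on the finite set $\{x_i\}$ and the norm slices $S(B_Z,G_i,\varepsilon)$. This produces points $u_i$ in these slices together with contractions $T$ that send $u_i$ to $z$ and almost fix the $x_i$. Pulling back a norming functional along $T^*$ yields a functional in the weak-star slice that nearly agrees with $G_i$ at $u_i$, hence has $\|G_i+G\|$ close to $2$.

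I expect this last pullback to be the main obstacle. A single $G$ has to work for all $i$ simultaneously, and the ODP handles only one slice at a time. If that becomes awkward, the fallback is to cite the known equivalence between the Daugavet property of $Z$ and this weak-star octahedrality-type property of $Z^*$, due to Kadets et al.\ and Becerra--López--Rueda. Applying it to $Z$ through step (1) then finishes the proof. No approximation property is needed anywhere, since the duality $(X\widehat\otimes_\pi Y)^*\cong\mathcal L(X,Y^*)$ holds in general.
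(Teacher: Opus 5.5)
Your route is the argument implicit in the paper, which states this corollary without proof: Theorem~\ref{thm:main} gives the ODP, hence the Daugavet property, of $L_1(\mathcal M,\tau)\widehat\otimes_\pi Y$, and the standard weak-star dual form of the Daugavet property is then read off in $\mathcal L(L_1(\mathcal M,\tau),Y^*)$. Your fallback citation suffices, since the case $n=1$ is the Kadets et al.\ characterization and the finite-set case follows by nested weak-star subslices. Only the optional self-contained sketch needs repair. First, $T^*z^*$ lies in the weak-star slice only if $z$ itself is among the points almost fixed by $T$; the auxiliary $x_i$ are superfluous. Second, simultaneity in $i$ is obtained by choosing $u_n,\ldots,u_1$ successively, letting each ODP application also almost fix the previously chosen $u_j$, and then composing the resulting operators.
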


The ODP implies the WODP. Recent results show that the WODP implies its polynomial version and transfers to projective symmetric tensor powers \cite{DantasMartinPerreau}. Thus we obtain:

\begin{corollary}\label{cor:symmetric}
Under the assumptions of Theorem~\ref{thm:main}, let $Y\ne\{0\}$ and $N\in\mathbb N$. Then the $N$-fold projective symmetric tensor product
\[
\widehat\otimes_{\pi,s,N}\bigl(L_1(\mathcal M,\tau)\widehat\otimes_\pi Y\bigr)
\]
has the weak operator Daugavet property and, in particular, the Daugavet property.
\end{corollary}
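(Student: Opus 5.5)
The plan is to derive Corollary~\ref{cor:symmetric} by chaining implications already in hand, rather than by repeating the corner construction inside the symmetric tensor product. First, Theorem~\ref{thm:main} shows that $Z=L_1(\mathcal M,\tau)\widehat\otimes_\pi Y$ has the ODP, and the witnessing operators can be taken contractive. Second, the ODP implies the weak operator Daugavet property (WODP) of \cite{RuedaTradaceteVillanueva}: the WODP asks only that, for each target $x'$, there exist a point of the slice and an operator $T$ with $T(x)$ close to $x'$ that almost fixes $x_1,\dots,x_n$. In the ODP one point $x$ works uniformly for every $x'$ and is mapped exactly onto $x'$, so each WODP requirement is met verbatim. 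Hence $Z$ has the WODP.

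Next, I would invoke \cite{DantasMartinPerreau}, which shows that the WODP of a Banach space $Z$ passes to the $N$-fold projective symmetric tensor product $\widehat\otimes_{\pi,s,N}Z$ for every $N\in\mathbb N$, possibly via a polynomial version of the WODP. Applying this with our $Z$ gives the WODP for $\widehat\otimes_{\pi,s,N}\bigl(L_1(\mathcal M,\tau)\widehat\otimes_\pi Y\bigr)$. The case $N=1$ is trivial, since the symmetric power is then $Z$ itself.

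Finally, the WODP implies the Daugavet property. Given a slice $S$, a point $x_1\in S_X$ and $\varepsilon>0$, I would choose $x'=-x_1$ and obtain $x\in S$ and $T$ with $\|T\|\le1+\varepsilon$, $\|Tx_1-x_1\|<\varepsilon$ and $\|Tx+x_1\|<\varepsilon$. Then
\[
2-2\varepsilon\le \|Tx_1-Tx\|-\varepsilon\le(1+\varepsilon)\|x_1-x\|,
\]
and letting $\varepsilon\to0$ gives the standard slice characterization $\sup_{x\in S}\|x_1-x\|=2$ of the Daugavet property.

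The main obstacle is the middle step. One must check that the precise form of the transfer theorem in \cite{DantasMartinPerreau} applies: whether it needs the WODP for the base space or a polynomial variant, and whether it uses complex scalars consistently with our setting. I would first confirm the hypotheses there, and, if only the polynomial WODP is transferred, quote their result that the WODP implies its polynomial version before applying the transfer.
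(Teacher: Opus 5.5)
Your proposal is correct and follows the paper's route: Theorem~\ref{thm:main} gives the ODP, the ODP trivially gives the WODP, and the transfer result of \cite{DantasMartinPerreau} yields the WODP, hence the Daugavet property, for the symmetric tensor powers; you also spell out the two elementary implications that the paper leaves implicit. There is one harmless slip in your last display: from $\|Tx_1-x_1\|<\varepsilon$ and $\|Tx+x_1\|<\varepsilon$ you get $2-2\varepsilon<\|Tx_1-Tx\|\le(1+\varepsilon)\|x_1-x\|$, not $2-2\varepsilon\le\|Tx_1-Tx\|-\varepsilon$, and the conclusion is unaffected.
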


\bmhead{Acknowledgements}

Thanks to all the members of the Functional Analysis Research Team at the School of Mathematics and Statistics, Anqing Normal University, for their valuable discussions and corrections regarding the
challenges and errors encountered in this article. 

\section*{Declarations}

\begin{itemize}
	\item	Funding: This work received no funding support.
	\item Conflict of interest: The author declares no conflict of
	interest.
	\item Data availability: This is a purely theoretical study without any experimental. All conclusions are obtained via mathematical analysis and proof.
	
	\item Author contributions:
	Junxiang Qi, Qi Liu, and Yongjin Li contributed equally to the conceptualization, theoretical development, proofs, manuscript preparation, and revision of this work. All authors discussed the results, reviewed the manuscript, and approved the final version.
	
\end{itemize}

\backmatter

%
%

\end{document}